\newtheorem{thm}{Theorem} [section]
\newtheorem{exe}{Example}[section]
\newtheorem{defin}{Definition} [section]
\newtheorem{remark}{Remark}[section]
\newtheorem{prop}{Proposition}[section]
\newtheorem{lemma}{Lemma}[section]
\def\R{\text{$\mathbb{R}$}}
\def\fis#1{\dot{#1}}
\def\lra{\longrightarrow}
\def\pt#1#2{\tau^{#2}_{#1}}
\def\d1#1#2{\frac{d#1}{d#2}}
\def\tpm{T_p M}
\def\pt#1#2{\tau^{#2}_{#1}}
\def\p1#1#2{\frac{\partial #1}{\partial #2}}
\def\part{a=t_0 < t_1 < \ldots < t_{k-1} < t_{k}=b}
\def\taT{\tilde{T}}
\def\N{\text{$\mathbb{N}$}}
\def\H{\text{$\mathbb{H}$}}
\newcommand{\riem}[2]{({#1},{#2})}
\newcommand{\scalar}{\langle \cdot , \cdot \rangle}
\newcommand{\dd}{\mathrm{d}}
\begin{document}
\author{Leonardo Biliotti and Francesco Mercuri}
\title{Riemannian  Hilbert manifolds}
\address{Universit\`{a} di Parma} \email{leonardo.biliotti@unipr.it}
\address{Universidade Estadual de Campinas (UNICAMP)}
\email{mercuri@ime.unicamp.br}
\subjclass[2000]{58B99;57S25}
\thanks{The authors were partially supported by FIRB 2012 ``Geometria differenziale e teoria geometrica delle funzioni'' grant RBFR12W1AQ003.
The first author was partially supported by GNSAGA of INdAM and PRIN ``Variet\`a reali e complesse: geometria, topologia e analisi armonica''.  The second author was partially supported by Fapesp and CNPq (Brazil).} %
\keywords{complete Riemannian Hilbert manifold, homogeneous space, properly discontinuous action}
 \maketitle
\begin{abstract}
In this article we collect results obtained by the authors jointly with other authors and we discuss old and new ideas. In particular we discuss singularities of the exponential map, completeness and homogeneity for  Riemannian Hilbert quotient manifolds. We also extend a Theorem due to Nomizu and Ozeki to infinite dimensional Riemannian Hilbert manifolds.
\end{abstract}
\section{Introduction}
Let $\H$\ be a Hilbert space. A Riemannian  Hilbert manifold $\riem{M}{\langle \cdot , \cdot \rangle}$,\ RH manifold for short, is a smooth manifold modeled on the Hilbert space $\H$, equipped with an inner product $\langle \cdot , \cdot \rangle_p$ on any tangent space $\tpm$ depending smoothly on $p$ and defining on $\tpm \cong \H$ a norm equivalent to the  one  of $\H$.

The local Riemannian geometry of RH manifolds goes  in the same way as in the finite dimensional case.\ We can prove, just like in the finite dimensional case, the existence and uniqueness of a symmetric connection, compatible with the Riemannian metric, the Levi-Civita connection, characterized by the Koszul formula
\[
\begin{split}
2\langle \nabla _X Y,Z \rangle &= X \langle Y,Z \rangle + Y \langle Z,X  \rangle -
Z \langle X,Y \rangle \\ &+
 \langle [X,Y],Z \rangle + \langle [Z,X],Y \rangle - \langle [Y,Z],X \rangle.
\end{split}
\]
Hence we can define covariant differentiation of a vector field along a smooth curve, parallel translation, geodesics, exponential map, the curvature tensor $R$, its sectional curvature $K$ etc., just like in the finite dimensional case (see \cite{ee,Kl,La}  for details).

The investigation of global properties in infinite dimensional geometry is harder than in the finite dimensional case essentially because of the lack of local compactness. For example, there exist complete RH manifolds with points that cannot be connected by  minimal geodesics, complete connected RH manifolds for which the exponential map is not surjective etc. (see Section \ref{s1}). Moreover, on some RH manifolds one can construct  finite geodesic segments containing  infinitely many conjugate points \cite{Gr}. A complete description of conjugate points along finite geodesic segment is given in \cite{bpet} and similar questions have been studied in \cite{bi2,klmp,klmp2,Mi4,Mi3,Mi,Mi2,mi5,mi6}.

The aim of this survey is to describe  results obtained by the authors jointly with D. Tausk, R. Exel and P. Piccione and others authors \cite{at,bmt,bi,bi2,bi3,bpet,bm,ek,Gr,mc}.  We have tried to avoid technical results in order to make the paper more readable also by non experts in this field. The interested reader will find details and further  results in papers and books quoted in the bibliography.

This paper is organized as follows. In Section \ref{s2}, we investigate complete Riemannian Hilbert manifolds. We extend a Theorem due to Nomizu and Ozeki \cite{no} to Riemannian Hilbert manifolds. We also investigate Hopf-Rinow manifolds, i.e., Riemannian Hilbert manifolds such that there exists minimal geodesic between any two points of $M$, properly discontinuous actions on Riemannian Hilbert manifolds and homogeneity for Riemannian Hilbert quotient manifolds. We also point out that if $M$ has constant sectional curvature then completeness is equivalent to geodesically completeness and there are not non trivial Clifford translations on a Hadamard manifold.
In Section \ref{s1}, following the point of view used by Karcher  \cite{Kar}, we introduce the Jacobi flow, we discuss singularities of the exponential map and the main result proved in \cite{bpet}.
\section{Complete Riemannian Hilbert manifolds}\label{s2}

Let $M$\ be a RH manifold. If $\gamma: [a,b] \subseteq \R \lra M$\ is a piecewise smooth curve, the length of $\gamma$\ is defined, as in the finite dimensional case, $L(\gamma) = \int^b_a \|\dot{\gamma}(t)\| dt$. Then, if $M$\ is connected, we can define a distance function
$$\mathrm{d} (p,q) = inf\{L(\gamma) : \gamma \ \textrm{is a piecewise smooth curve joining}\ \ p\ \textrm{and}\ q\}.
$$
The function $\mathrm{d}$\ is, in fact, a distance and induces the original topology of $M$\ \cite{La,Pa}.
\begin{defin}
We will say that a \ RH manifold $M$\ is {\em complete} if it is complete as a metric space.
\end{defin}
Let $M$ be a Hilbert manifold. A natural question is if there exists a Riemannian metric $\scalar$ such that $(M,\scalar)$ is a complete RH manifold. McAlpin \cite{mc} proved that any separable Hilbert manifold modeled on a separable Hilbert space can be embedded as a closed submanifold of a separable Hilbert space. Hence, if $f:M\lra \H'$\ is such an embedding,  $M$,\ with the induced metric, is a complete RH manifold.  The following result is an extension  to the infinite dimensional case of a  Theorem due to Nomizu and Ozeki \cite{no}.
\begin{thm}
Let $(M,\scalar)$ be a separable RH manifold modeled on a separable Hilbert space. Then there exists a positive smooth function $f:M\lra \R$ such that $(M,f\scalar )$ is a complete RH manifold.
\end{thm}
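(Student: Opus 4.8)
The plan is to adapt the Nomizu--Ozeki argument \cite{no}, the only real change being that ``closed balls with compact closure'' --- meaningless here --- must be replaced by ``closed metric balls that are complete''. Denote by $d$ the Riemannian distance of $\scalar$ and, for $p\in M$, set
\[
r(p)=\sup\{\rho>0\colon\ \overline{B}_\rho(p)\ \text{is complete as a metric space}\},
\]
with the convention $r(p)=+\infty$ if all closed balls about $p$ are complete. First I would check $r(p)>0$ for every $p$: in a chart about $p$ the metric $\scalar$ is, by continuity, uniformly equivalent to the norm of $\H$ on a small neighbourhood, so a sufficiently small closed $d$-ball about $p$ is a closed subset of a complete ball of $\H$, hence complete. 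If $r(p_0)=+\infty$ for some $p_0$, then every $d$-bounded set lies in a complete ball, so $(M,\scalar)$ is already complete and $f\equiv 1$ works. Otherwise $r\colon M\to(0,+\infty)$, and since a closed subset of a complete ball is complete and $\overline{B}_\rho(p)\subseteq\overline{B}_{\rho+d(p,q)}(q)$, one gets $|r(p)-r(q)|\le d(p,q)$; in particular $r$ is continuous.

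Next, using that $M$ is separable and modelled on a separable Hilbert space --- hence metrizable and Lindel\"of, so that smooth partitions of unity are available \cite{La} --- I would produce a smooth function $\varphi\colon M\to\R$ with $\varphi\ge\max\{1,2/r\}$ everywhere, for instance by setting $\varphi=\sum_i c_i\lambda_i$, where $\{\lambda_i\}$ is a smooth partition of unity subordinate to an open cover $\{U_i\}$ on each member of which $\max\{1,2/r\}\le c_i<\infty$. Put $f=\varphi^2$. Then $(M,f\scalar)$ is again an RH manifold, since multiplying $\scalar_p$ by the positive number $f(p)$ does not change the equivalence class of the norm it induces on $\tpm$; moreover $\sqrt{f}\ge 1$ and $\sqrt{f}\ge 2/r$ on $M$.

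It remains to show $(M,f\scalar)$ is complete (we may assume $M$ connected, arguing componentwise otherwise). Here I would use the fact that in a length space completeness is equivalent to: every rectifiable curve $\gamma\colon[0,1)\to M$ of finite length extends continuously to $[0,1]$ --- a finite-length curve is uniformly continuous, and conversely a Cauchy sequence can be joined by short curves concatenating to such a $\gamma$. So let $\gamma\colon[0,1)\to M$ have finite $f\scalar$-length $\ell$. Since $\sqrt{f}\ge 1$, its $\scalar$-length $\Sigma$ is finite, hence $\gamma$ is $d$-Cauchy as $t\to1$. If $\gamma$ did not converge, then $\liminf_{t\to1}r(\gamma(t))=0$: otherwise $r\circ\gamma\ge\delta>0$ on a final interval, and the $d$-Cauchy tail of $\gamma$ would lie in the ball $\overline{B}_{\delta/2}(\gamma(t_0))$, which is complete because $\delta/2<r(\gamma(t_0))$, and would therefore converge. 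Reparametrizing $\gamma$ by $\scalar$-arclength gives a $1$-Lipschitz curve $\overline{\gamma}\colon[0,\Sigma)\to M$, and since $r$ is $1$-Lipschitz the function $\rho(u):=r(\overline{\gamma}(u))$ is positive and $1$-Lipschitz on $[0,\Sigma)$ with $\liminf_{u\to\Sigma}\rho=0$; as $\Sigma<\infty$ and $\rho$ is $1$-Lipschitz, $\rho$ cannot pass between two fixed distinct levels infinitely often near $\Sigma$ (each such passage costs a definite amount of arclength), so in fact $\rho(u)\to0$ as $u\to\Sigma$. Then $\rho(u)\le\Sigma-u$ near $\Sigma$, whence
\[
\ell\;=\;\int_0^{\Sigma}\sqrt{f(\overline{\gamma}(u))}\,du\;\ge\;\int_0^{\Sigma}\frac{2}{\rho(u)}\,du\;\ge\;\int^{\Sigma}\frac{2\,du}{\Sigma-u}\;=\;+\infty ,
\]
a contradiction. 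Hence every finite $f\scalar$-length curve on $[0,1)$ converges, and $(M,f\scalar)$ is complete.

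The main obstacle, and the only point where infinite dimensionality really intervenes, is this substitute for compactness: one must know that small closed metric balls are complete --- exactly where the local Hilbert model and the equivalence of norms on tangent spaces enter --- so that the ``incompleteness radius'' $r$ is positive, and then that $r$ is Lipschitz and, in the non-trivial case, finite. Granting these facts about $r$, the smooth correction of $1/r$ is routine (modulo partitions of unity, where separability is used), and the final length estimate is essentially the classical Nomizu--Ozeki one.
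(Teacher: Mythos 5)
Your proposal is correct, and while the skeleton (the ``completeness radius'' $r$, its $1$-Lipschitz property, and a smooth majorant of $1/r$ used as conformal factor) coincides with the paper's, the decisive completeness argument at the end is genuinely different. Three points of comparison. First, for the positivity of $r$ the paper invokes Ekeland's result that some closed geodesic ball is a complete metric space, whereas you re-derive it from the chart and the norm-equivalence in the definition of an RH manifold; both work, though your version silently uses the standard lower bound on the length of curves leaving a small chart neighbourhood, which is what guarantees a small $d$-ball stays inside the chart. Second, to smooth $1/r$ the paper cites an approximation theorem of Llavona/Wulbert, while you build the majorant with a smooth partition of unity; this is legitimate here because a separable Hilbert space admits smooth bump functions, so separable Hilbert manifolds admit smooth partitions of unity. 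Third, and most substantially: the paper proves completeness of $(M,f\scalar)$ by showing that every closed ball $\overline{B^{\scalar'}(x,\tfrac13)}$ is complete for the new distance, via a chain of ball inclusions and a comparison of Cauchy sequences; you instead use the length-space characterization of completeness (every finite-length curve on $[0,1)$ extends) together with the escape estimate $\rho(u)\le \Sigma-u$ forced by the $1$-Lipschitz continuity of $r$, so that the new length dominates $\int^{\Sigma}\!\frac{du}{\Sigma-u}=+\infty$. Your route is shorter and more conceptual (the oscillation argument ruling out $\liminf=0<\limsup$ is the one nonstandard touch, and it is correct since each oscillation costs a fixed amount of the finite arclength $\Sigma$); the paper's route stays closer to Nomizu--Ozeki and yields the extra quantitative information that all $\scalar'$-balls of radius $\tfrac13$ are complete. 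Note also that the factor $2$ in $\varphi\ge 2/r$ is not needed --- $\varphi\ge 1/r$ already makes the final integral diverge --- but it is harmless.
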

\begin{proof}
Consider the geodesic ball $ B(p,\epsilon) = \{ q \in M: \mathrm{d} (p,q) < \epsilon\}$. By a result of Ekeland \cite{ek} there exists $\epsilon>0$ such that $\overline{B(p,\epsilon)}$ is a complete metric space. We define
\[
r:M \lra \R, \qquad r(p)=\mathrm{sup}\{r>0:\, \overline{B(p,\epsilon)} \ \mathrm{is\ a\ complete\ metric\ space} \} .
\]
If $r(p)=\infty$ for some $p\in M$ then $(M,\scalar)$ is complete. Hence we may assume $r(p)<+\infty$ for every $p\in M$. We claim $|r(p)-r(q)|\leq \dd(p,q)$ and so it is a continuous function. Indeed, if $\dd (p,q)\geq \mathrm{max}(r(p),r(q))$ then the above inequality holds true. Hence, we may assume without lost of generality that $\dd (p,q)<r(p)$. Pick  $0<\epsilon <\frac{r(p)-d(p,q)}{2}$. The triangle inequality implies $B(q,r(p)- \dd (p,q)-\epsilon)\subset B(p,r(p)-\epsilon)$ and so  $\overline{B(q,\\d (p,q)-r(p)-\epsilon)} \subset \overline{B(p,r(p)-\epsilon)}$. Hence $r(q)\geq r(p)- \dd (p,q)$ and so $|r(p)-r(q)|\leq \dd (p,q)$. Applying a result of \cite{lj}, see also \cite{wul}, there exists a smooth function $f:M \lra \R$ such that $f(x)> \frac{1}{r(x)}$ for any $x\in M$. Pick $\scalar' (x)=f^2 (x) \scalar (x)$. Then $(M,\scalar')$ is a RH manifold. We denote by $\dd '$ the distance defined by $\scalar'$.

Let $x,y\in M$ and let $\gamma:[0,1] \lra M$ be a piecewise smooth curve joining $x$ and $y$. We denote by $L$, respectively $L'$, be the length of $\gamma$ with respect to $\langle \cdot,\cdot \rangle$, respectively the length of $\gamma$ with respect to $\langle \cdot,\cdot \rangle'$. Then
\[
L'=\int_0^1 f(\gamma(t)) \langle \fis \gamma (t), \fis \gamma (t) \rangle^{1/2} \mathrm d t \geq f(\gamma(c)) L > \frac{1}{r(\gamma(c))} L
\]
where $0\leq c \leq 1$. Since $r(\gamma(c))\leq r(x)+d(x,\gamma(c))\leq r(x)+L$, it follows $L'>\frac{L}{r(x)+L}$. Therefore, as in \cite{no}, for any $0<\epsilon< 1$ and for any $x\in M$, we get
$B^{\scalar'}(x,\frac{1}{3-\epsilon})$ is contained in $B(x,\frac{r(x)}{2-\epsilon})$.  Hence $\overline{B^{\scalar'} (x,\frac{1}{3-\epsilon})}$ is a complete metric space, with respect to $\dd$. We claim that $\overline{B^{\scalar'} (x,\frac{1}{3})}$ is a complete metric space with respect to $\dd'$ as well.

Let $\{x_n \}_{n\in \N}$ be a Cauchy sequence of $\overline{B^{\scalar'} (x,\frac{1}{3})}$ with respect to $\dd'$. Let $0<\epsilon<\frac{2}{21}$. Then there exists $n_o$ such that for any $n,m\geq n_o$ we get $\dd'(x_n,x_m)\leq \frac{\epsilon}{4}$. We claim that if $\gamma:[0,1] \lra  M$ is a curve joining $x_n$ and $x_m$, for any $n,m\geq n_o$,  such that $L(\gamma)<\frac{\epsilon}{2}$, then $\gamma([0,1])\subset B(x,\frac{3r(x)}{4})$. Indeed, let $t\in[0,1]$. Then
\[
\dd'(\gamma(t),x)\leq \dd' (\gamma(t),x_n)+\dd' (x_n,x_m)+ \dd' (x_m,x)< \frac{\epsilon}{2}+\frac{\epsilon}{4} +\frac{1}{3}<\frac{1}{3}+\epsilon=\frac{1}{3-\epsilon'},
\]
where $\epsilon'=\frac{9\epsilon}{1+3\epsilon}$. Hence
$
\dd' (\gamma(t),x) < \frac{1}{3-\epsilon'}$ and so $\dd(\gamma(t),x)< \frac{r(x)}{2-\epsilon'}<\frac{3r(x)}{4}.$

Now,
$
L'\geq \frac{1}{r(\gamma(c))} L,
$
for some $0\leq c \leq 1$.
Since $d(\gamma(c),x) < \frac{3r(x)}{4}$, it follows $r(\gamma(c))\leq r(x)+d(x,\gamma(c))\leq r(x)+\frac{3r(x)}{4}=K_o$ and so
$
L'\geq \frac{1}{K_o} L\geq \frac{1}{K_o} \dd(x_n,x_m).
$
Hence
$
\dd'(x_n,x_m)\geq \frac{1}{K_o} \dd (x_n,x_m)
$
and so $\{x_n\}_{n\geq n_o}$ is a Cauchy sequence of $\overline{B(x,\frac{3 r(x)}{4})}$ with respect to $\dd$. Therefore  it converges proving $\overline{B^{\scalar'}(x,\frac{1}{3})}$ is complete with respect to $\dd'$, for any $x\in M$.

Let $\{x_n \}_{n\in \N}$ be a Cauchy sequence with respect to the distance $\dd '$.
Then there exists $n_o$ such that $x_n \in \overline{B^{\scalar'}(x_{n_o},\frac{1}{3})}$ for $n\geq n_o$. Hence $\{x_n \}_{n\geq n_o}$  is a Cauchy sequence of $\overline{B^{\scalar'}(x_{n_o},\frac{1}{3})}$ and so it converges.
\end{proof}
\begin{remark}
In \cite{no} the authors consider the function $r(x)$ to be the supremum of positive numbers $r$ such that the neighborhood $B(x,r)$ is relative compact. This function does not work if $M$ has infinite dimension due of the lack of the local compactness. Moreover, in the finite dimensional case $\overline{B^{\scalar'} (x,\frac{1}{3})}$ is a complete metric space with respect to $d'$ since it is contained in $B(x,\frac{r(x)}{2})$ and so it is compact. In our case, we have to check directly that $\overline{B^{\scalar'} (x,\frac{1}{3}})$ is complete .
\end{remark}
If $M$ is a connected finite dimensional RH manifold, then $M$\  is  complete if and only if it is \emph{geodesically complete} at some point $p\in M$, i.e., there exists $p\in M$ such that the maximal interval of definition of any geodesics starting at $p$ is all of $\R$  and so the exponential map $\exp_p$ is defined on all of $\tpm$ . This also implies that the exponential map $\exp_q$ is defined in all of $T_q M$ for any $q\in M$ and any two points can be joined by a minimal geodesic.
These facts are not true, in general, for  infinite dimensional RH manifolds. The following example is due to Grossman \cite{Gr}.
\begin{exe} Let $\H$\ be a separable Hilbert space with an orthonormal basis $\{e_i, i\in \N$\}. consider
$$
M= \{\sum_{i=1}^{\infty} x_i e_i \in \H  :\ x_1^2 +
\sum_{i=2}^{\infty}(1- \frac{1}{i})^2  x_i^2 =1\}.
$$
Then $M$\
is a complete RH manifold such that $e_1$ and $ -e_1 \in M$ can be connected by infinitely many geodesics but there are not a minimal geodesics between the two points.
\end{exe}
\begin{remark} Atkin \cite{at} modified the above example to construct a complete RH manifold such that the exponential map at some point fails to be surjective.
\end{remark}
 On the other hand the following result holds.
\begin{thm}\label{complete}
Let $M$ be a complete RH manifold and $p\in M$. Then the exponential map $\exp_p$ is defined on all of $\tpm$. Moreover, the set $\mathcal M_p =\{q\in M:$ there exists a unique minimal geodesic joining $p$ and $q$ $\}$ is dense in $M$
\end{thm}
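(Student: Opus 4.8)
The statement splits into two essentially independent assertions, which I would handle separately.

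\emph{First assertion: $\exp_p$ is defined on all of $\tpm$.} The plan is the classical escape argument. Suppose $\gamma\colon[0,T)\to M$ is a geodesic with $\gamma(0)=p$ and maximal $T<\infty$. Since $\gamma$ has constant speed $c$, one has $\mathrm{d}(\gamma(s),\gamma(t))\le c\,|s-t|$, so $\gamma$ is uniformly continuous and, by completeness of $M$, $\gamma(t)\to q$ for some $q\in M$ as $t\to T^{-}$. Working in a chart around $q$, the Christoffel symbols are bounded on a small enough neighbourhood of $q$ (by continuity), and since $\|\dot\gamma\|$ is bounded there, the geodesic equation forces $\dot\gamma$ to be Lipschitz near $T$; hence $(\gamma(t),\dot\gamma(t))$ converges in $TM$ to some $(q,v)$, and the resulting curve is an integral curve of the smooth geodesic spray on $[0,T]$. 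The local flow theorem for the spray at $(q,v)$ (valid for $C^\infty$ vector fields on Banach manifolds, cf. \cite{La}) then extends this integral curve past $T$, contradicting maximality. Thus every geodesic from $p$ is defined on all of $\R$.

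\emph{Second assertion: a reduction.} I would first reduce the density of $\mathcal M_p$ to the density of $\mathcal G_p:=\{q\in M:$ there is at least one minimal geodesic joining $p$ and $q\}$. Indeed, if $q''\in\mathcal G_p$ and $\gamma$ is a unit speed minimal geodesic from $p=\gamma(0)$ to $q''=\gamma(\ell)$, then for small $s>0$ the point $\gamma(\ell-s)$ lies strictly before the cut value of $p$ along $\gamma$, and is therefore joined to $p$ by a \emph{unique} minimal geodesic: any second one, concatenated with $\gamma|_{[\ell-s,\ell]}$, would be a broken curve of length $\ell=\mathrm{d}(p,q'')$, hence a minimal curve, hence a smooth geodesic, forcing the two to coincide. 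This argument is purely local and transfers verbatim to RH manifolds. Since $\gamma(\ell-s)\to q''$ as $s\to 0$, density of $\mathcal M_p$ follows from density of $\mathcal G_p$.

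\emph{Second assertion: density of $\mathcal G_p$.} For this I would invoke Ekeland's variational principle \cite{ek}, the same analytic tool already used above for the Nomizu--Ozeki extension. Fix $q\in M$ and $\epsilon>0$. The idea is to apply Ekeland to a penalised length (or energy) functional on a suitable complete metric space — for instance a complete space of Lipschitz curves issuing from $p$, with endpoint constrained to lie in $\overline{B(q,\epsilon)}$, where the infimum of length equals $\mathrm{d}(p,q)-\epsilon$. The principle produces a curve $c_0$ whose endpoint $q'$ lies in $\overline{B(q,\epsilon)}$, whose length is within an arbitrarily small quantity of $\mathrm{d}(p,q')$, and which satisfies the strict Ekeland inequality; combining that inequality with the first variation formula and with the fact (first assertion) that $\exp_p$ is everywhere defined, one upgrades $c_0$ to an \emph{exact} minimal geodesic from $p$ to $q'$. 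Then $q'\in\mathcal G_p\cap\overline{B(q,\epsilon)}$, and letting $\epsilon\downarrow 0$ gives density of $\mathcal G_p$, hence of $\mathcal M_p$.

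\emph{Main obstacle.} The delicate point is precisely this last upgrade, from an Ekeland \emph{almost}-minimiser to a genuine minimal geodesic. In finite dimensions one would instead take an almost minimising sequence of curves and extract a convergent subsequence by Arzel\`a--Ascoli, but local compactness is unavailable here; all the weight of the argument must therefore be carried by the variational inequality itself together with the everywhere-defined exponential map, which replaces compactness. A secondary, routine technical point is the completeness of the auxiliary space of curves, which is a consequence of the completeness of $M$.
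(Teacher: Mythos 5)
Your treatment of the first assertion is correct and is exactly what the paper intends by ``the first part can be proved as in the finite dimensional case'': constant speed plus metric completeness gives a limit endpoint $q$, and boundedness of the Christoffel map in a chart near $q$ lets the local flow of the geodesic spray push the integral curve past $T$. Your reduction of the density of $\mathcal M_p$ to that of $\mathcal G_p$ is also sound: an interior point $\gamma(\ell-s)$ of a minimizing geodesic is joined to $p$ by a unique minimizer, since a competing minimizer concatenated with $\gamma|_{[\ell-s,\ell]}$ would be a broken curve of length $\mathrm{d}(p,q'')$, hence a smooth geodesic by first variation, forcing coincidence. Note, though, that this decomposition differs from the paper's, which never passes through $\mathcal G_p$: it quotes Ekeland's stronger result that $\mathcal M_p$ itself contains a countable intersection of open dense subsets and concludes by Baire's theorem.

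The genuine gap is in your argument for the density of $\mathcal G_p$. The step you yourself flag as the ``main obstacle'' --- upgrading the Ekeland almost-minimizer $c_0$ to an exact minimal geodesic from $p$ to a nearby endpoint $q'$ --- is not a routine technicality to be deferred: it is the entire content of Ekeland's theorem in \cite{ek}, and as written you give no mechanism for carrying it out. The variational principle only produces a curve whose length exceeds $\mathrm{d}(p,q')$ by a controlled amount and which satisfies an approximate criticality inequality; in the absence of local compactness there is no a priori reason that such a curve, or a limit of such curves, is a geodesic realizing the distance. Indeed, Grossman's and Atkin's examples quoted in the paper show that minimizers between two \emph{fixed} points can genuinely fail to exist, so any successful argument must exploit the freedom to perturb the target point $q'$ --- and that is precisely where all of Ekeland's work lies. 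As it stands, the proposal replaces the paper's citation of that theorem with a sketch that stops exactly at its crux, so the second assertion is not actually proved.
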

The first part of the Theorem can be proved as in the finite dimensional case. The second part is a result due to Ekeland \cite{ek}. He proved
$\mathcal M_p $ contained a countable intersection of open and dense subsets of $M$. By the Baire's Theorem  it follows $\mathcal M_p$ is dense.

The next result proves that a RH manifold of constant sectional curvature which is geodesically complete it
is also complete.
\begin{prop}
Let $M$ be a RH manifold of constant sectional curvature $K_o$.
Then $M$ is a complete RH manifold if and only there exists $p\in M$ such that $\exp_p$ is defined in all of $\tpm$.
\end{prop}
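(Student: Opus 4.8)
The first implication is immediate: if $M$ is complete then, by Theorem~\ref{complete}, $\exp_p$ is defined on all of $\tpm$ for every $p\in M$. For the converse --- geodesic completeness at one point forces metric completeness --- the plan is to compare $M$ with the complete model space of constant curvature $K_o$ and then run a covering space argument; here constant curvature is genuinely used, since for general RH manifolds geodesic completeness does not suffice for metric completeness.

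Let $N=N_{K_o}$ be the simply connected RH manifold of constant curvature $K_o$ modelled on $\H$: Euclidean $\H$ if $K_o=0$, the sphere of radius $1/\sqrt{K_o}$ in $\R\times\H$ if $K_o>0$, the infinite dimensional hyperbolic space if $K_o<0$. In each case $N$ is a complete RH manifold, so by Theorem~\ref{complete} again $\exp_x^N$ is defined on all of $T_xN$ for every $x\in N$. Next, in any RH manifold of constant curvature $K_o$ the Jacobi equation along a unit speed geodesic $\gamma$, restricted to the parallel normal bundle of $\dot\gamma$, decouples into the scalar equation $u''+K_o\,u=0$ exactly as in finite dimensions, so a normal Jacobi field $J$ with $J(0)=0$ satisfies $J(t)=s_{K_o}(t)\,P_t\big(J'(0)\big)$, where $P_t$ is parallel translation and $s_{K_o}$ is the solution of $u''+K_o u=0$ with $u(0)=0$, $u'(0)=1$. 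Together with the Gauss lemma this yields the infinite dimensional Cartan comparison: any linear isometry $\iota\colon T_{\bar p}N\to\tpm$ intertwines the pulled back metrics $(\exp_p)^*\scalar$ and $(\exp_{\bar p}^N)^*\scalar_N$ wherever both are defined and nondegenerate.

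Fix $p\in M$, $\bar p\in N$ and a linear isometry $\iota\colon T_{\bar p}N\to\tpm$. If $K_o\le 0$ then $\exp_{\bar p}^N$ is a global diffeomorphism and, since $\exp_p$ is defined on all of $\tpm$ by hypothesis, $\psi:=\exp_p\circ\iota\circ(\exp_{\bar p}^N)^{-1}\colon N\lra M$ is well defined and, by the comparison above, a local isometry. If $K_o>0$, then $\exp_{\bar p}^N$ restricted to the open ball of radius $\pi/\sqrt{K_o}$ is a diffeomorphism onto $N\setminus\{-\bar p\}$; one runs the construction separately on $N\setminus\{-\bar p\}$ and on $N\setminus\{\bar p\}$ (with a base point $-\bar p$ in the second chart), and glues the two local isometries along the connected overlap $N\setminus\{\bar p,-\bar p\}$, where they agree because a local isometry commutes with the exponential map and is therefore determined by its $1$-jet at one point. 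In all cases one obtains a local isometry $\psi\colon N\to M$; its image is open (local diffeomorphism) and closed (lift short radial geodesics to the geodesically complete $N$), hence $\psi$ is surjective, and the usual normal neighbourhood plus path lifting argument shows it is a Riemannian covering.

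Finally, a Riemannian covering $\psi\colon N\to M$ with $N$ complete forces $M$ complete: given a Cauchy sequence in $M$, pass to a subsequence $(y_k)$ with $\sum_k\dd(y_k,y_{k+1})<\infty$, and lift it inductively by choosing for each $k$ a path from $y_k$ to $y_{k+1}$ of length close to $\dd(y_k,y_{k+1})$ and lifting it starting at $\tilde y_k$; since $\psi$ is a local isometry the lift has the same length, so $\sum_k d_N(\tilde y_k,\tilde y_{k+1})<\infty$, hence $(\tilde y_k)$ is Cauchy in $N$, converges, and its image is a convergent subsequence of the original sequence, which therefore converges. The main obstacle is the construction of $\psi$ when $K_o>0$, where $\exp_{\bar p}^N$ fails to be injective and one must patch two charts and invoke the rigidity of local isometries of constant curvature spaces; the Cartan comparison itself is conceptually central but routine, as the normal Jacobi equation reduces to the same scalar ODE as in finite dimensions, and the remaining covering space bookkeeping goes through verbatim in the Hilbert setting once one knows, via Theorem~\ref{complete}, that the complete model space $N$ is geodesically complete.
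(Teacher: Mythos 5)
Your proof is correct and follows essentially the same strategy as the paper: use Cartan's rigidity theorem for constant curvature (via Proposition~\ref{p1} and the scalar Jacobi equation) to build a local isometry from the complete model space onto $M$, show it is a Riemannian covering, and deduce completeness of $M$. The only real variations are at the antipodal point when $K_o>0$ --- the paper extends $F$ across $-N$ by observing that all geodesics from $p$ focus at a single point at time $\pi/\sqrt{K_o}$, whereas you glue a second chart based at $-\bar p$ --- and that you spell out the covering-space lemmas the paper simply cites from Lang; both treatments dispose of $K_o\le 0$ by the Cartan--Hadamard picture.
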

\begin{proof}
By Theorem \ref{complete}, completeness implies geodesically completeness. Vice-versa,
if the sectional curvature is non positive then geodesic completeness is equivalent to completeness.
This is a consequence of a version of the Cartan-Hadamard Theorem due to McAlpin \cite{mc} and Grossman \cite{Gr,La}.
Hence we may assume  $K_o>0$.
Let $p\in M$ and let $S_{\sqrt{K_o}} (\tpm \times \R)$ the sphere of $\tpm\times \R$ of radius $\frac{1}{\sqrt{K_o}}$. Let $N=(0,\frac{1}{\sqrt{K_o}})\in S_{\sqrt{K_o}} (\H\times \R) $ and let $T:T_{N} S_{\sqrt{K_o}} (\H\times \R) \lra \tpm$ be an isometry. By Proposition \ref{p1} and a Theorem of Cartan \cite[Theorem 1.12.8]{Kl}, the map
\[
F=\exp_p \circ T \circ \exp_N^{-1} :   S_{\sqrt{K_o}} (\H\times \R) \setminus\{-N \} \lra M
\]
is a local isometry. Let $v\in T_{N } S_{\sqrt{K_o}} (\H\times \R)$ be a unit vector. Then $\gamma^v (t)=F(\exp_{N}(tv))$ is a geodesic in $M$. Let $q(v)=\gamma^v (\pi)$. It is easy to see that $q(v)=q(w)$ for any unit vector $w\in T S_{\sqrt{K_o}} (\H\times \R)$. Hence we may extend $F:S_{\sqrt{K_o}} (\H\times \R) \lra M$ and it is easy to check that it is still an isometry. Since $S_{\sqrt{K_o}} (\H\times \R)$
is complete, by \cite[Theorem 6.9 p. 228]{La} we get $F$ is a Riemannian covering map, and so $F$ is surjective, and $M$ is complete.
\end{proof}
\begin{defin}
A Hopf-Rinow  manifold is a complete RH manifold such that any two points $x,y\in M$ can be joined by a minimal geodesic.
\end{defin}
The unit sphere $S(\H)$ is Hopf-Rinow. The Stiefel manifolds of orthonormal $p$ frames in a Hilbert space $\H$  and the Grassmann manifolds  of $p$ subspaces of $\H$ are Hopf-Rinow manifolds \cite{bm,hm}. These manifolds are homogeneous, i.e., the isometry group acts transitively on $M$. It is easy to see that homogeneity implies completeness \cite{bm} but it does not imply the existence of path of minimal length between two points. We also point out that the isometry group of a complete RH manifold can be turned in a Banach Lie group and its Lie algebra is given by the Killing vector fields, i.e.,  vector fields $X$ such that $L_X \langle \cdot , \cdot \rangle=0$. Moreover the natural action of the isometry group on $M$ is smooth (see \cite{klotz}).

In \cite{bi,bm} properly isometric discontinuous actions on the unit sphere of a Hilbert space $\H$ and on the Stiefel and Grassmannian manifolds are studied. We recall that a group $\Gamma$ of isometries acts properly discontinuously on $M$ if for any $f\in \Gamma$, the condition $f (x)=x$ for some $x\in M$ implies $f=\mathrm{e}$ and the orbit throughout any element $x\in M$ is closed and discrete \cite{Kn}.
We completely classify properly discontinuous actions of a finitely generated abelian group on the unit sphere of a separable Hilbert space and we give new examples of complete RH manifolds, respectively K\"ahler RH manifolds, with non negative and non positive sectional curvature with infinite fundamental group, respectively with non negative holomorphic sectional curvature with infinite fundamental group (\cite{bi,bm}).
These new examples of RH  manifolds are Hopf-Rinow manifolds due the following simple fact.
\begin{prop}
Let $M$ be a Hopf-Rinow manifold. Let $\Gamma$ be a group acting isometrically and properly discontinuously on $M$. Then $M/\Gamma$ is also Hopf-Rinow.
\end{prop}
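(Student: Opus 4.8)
The plan is to exhibit $\pi\colon M\lra M/\Gamma$ as a Riemannian covering and then transport across $\pi$ the two properties that define a Hopf--Rinow manifold: metric completeness and the existence of minimal geodesics. First I would recall that, since $\Gamma$ acts freely with closed discrete orbits, each $x\in M$ has a neighbourhood $U$ with $gU\cap U=\emptyset$ for $g\neq\mathrm e$: otherwise one produces $g_n\neq\mathrm e$ with $\dd(x,g_nx)\to0$, so $g_nx\to x$ inside the discrete set $\Gamma x$, whence $g_nx=x$ for large $n$ and, by freeness, $g_n=\mathrm e$, a contradiction. Hence (as in \cite{bi,bm,Kn}) $M/\Gamma$ is an RH manifold and $\pi$ is a local isometry and a covering map. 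In particular $\pi$ preserves lengths of piecewise smooth curves and so does curve lifting, so that, writing $\overline{\dd}$ for the distance on $M/\Gamma$ and fixing lifts $p$ of $\bar p$ and $q$ of $\bar q$,
\[
\overline{\dd}(\bar p,\bar q)=\inf_{g\in\Gamma}\dd(p,gq),
\]
because a curve downstairs lifts to a curve from $p$ to some $gq$ of equal length and any curve from $p$ to $gq$ projects to one from $\bar p$ to $\bar q$ of equal length.

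\emph{Completeness of $M/\Gamma$.} Given a Cauchy sequence $\{\bar x_n\}$ in $M/\Gamma$, I would pass to a subsequence with $\overline{\dd}(\bar x_n,\bar x_{n+1})<2^{-n}$, pick curves $\bar c_n$ from $\bar x_n$ to $\bar x_{n+1}$ of length $<2^{-n}$, fix a lift $\tilde x_1$ of $\bar x_1$, and lift the $\bar c_n$ successively, obtaining points $\tilde x_n\in\pi^{-1}(\bar x_n)$ with $\dd(\tilde x_n,\tilde x_{n+1})<2^{-n}$. Then $\{\tilde x_n\}$ is Cauchy in $M$, hence converges since $M$ is complete, so $\bar x_n=\pi(\tilde x_n)$ converges; a Cauchy sequence with a convergent subsequence converges, so $M/\Gamma$ is complete. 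Note this argument avoids any appeal to local compactness.

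\emph{Minimal geodesics in $M/\Gamma$.} For $\bar p,\bar q\in M/\Gamma$ put $\ell=\overline{\dd}(\bar p,\bar q)=\inf_g\dd(p,gq)$. Once one knows this infimum is attained, say $\dd(p,g_0q)=\ell$, one takes a minimal geodesic $\sigma$ in $M$ from $p$ to $g_0q$ --- it exists because $M$ is Hopf--Rinow --- with $L(\sigma)=\ell$; then $\pi\circ\sigma$ is a geodesic from $\bar p$ to $\bar q$ of length $\ell=\overline{\dd}(\bar p,\bar q)$, hence minimal, and $M/\Gamma$ is Hopf--Rinow.

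\textbf{The main obstacle} is exactly the attainment of $\inf_{g\in\Gamma}\dd(p,gq)$. In finite dimensions this is immediate: the orbit points realising values $\le\ell+1$ lie in the closed ball $\overline{B(p,\ell+1)}$, which is compact, so a minimising sequence subconverges to a point of the closed discrete set $\Gamma q$ and must therefore be eventually constant. In infinite dimensions closed balls are not compact, so I would argue from the action itself: since $\Gamma$ acts by isometries preserving $\Gamma q$, the separation radius $\rho(x)=\sup\{r:B(x,r)\cap\Gamma q=\{x\}\}$ is constant along the orbit, so $\Gamma q$ is uniformly $\delta$-separated for some $\delta>0$. The remaining --- and delicate --- point is to show that $\{g\in\Gamma:\dd(p,gq)\le\ell+1\}$ is finite, which is where proper discontinuity of the action must be used in an essential way; once this is established, the steps above assemble the statement just as in the classical Hopf--Rinow / Nomizu--Ozeki setting.
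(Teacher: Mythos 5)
Your overall strategy is the same as the paper's: exhibit $\pi\colon M\lra M/\Gamma$ as a Riemannian covering, get completeness of the quotient by lifting (the paper simply quotes this from \cite{bi,La}), identify $\overline{\dd}(\bar p,\bar q)$ with the distance between the fibres, and project a minimal geodesic joining a closest pair of fibre points. The difficulty you isolate --- that $\inf_{g\in\Gamma}\dd(p,gq)$ must actually be \emph{attained} --- is indeed the crux, and it is exactly the point on which the paper's proof turns: there one fixes $z\in\pi^{-1}(\bar p)$ and produces $w\in\pi^{-1}(\bar q)$ with $\dd(z,w)=\dd\bigl(\pi^{-1}(\bar p),\pi^{-1}(\bar q)\bigr)$, the existence of such a $w$ being derived from the fact that the fibres are closed, discrete $\Gamma$-orbits (\cite{Kn} and the lemmas on such orbits in \cite{bi,bm}). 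Since you leave this step open, your argument is incomplete precisely where the work lies.

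Moreover, the route you propose for closing the gap --- proving that $\{g\in\Gamma:\dd(p,gq)\le\ell+1\}$ is finite --- is not just unavailable without local compactness; it is false in general. A closed ball in a Hilbert manifold can contain an infinite uniformly $\delta$-separated set, and this occurs for genuine properly discontinuous actions: for the $\Z$-action on the unit sphere of $\ell^2(\Z)$ generated by the bilateral shift (one of the actions classified in \cite{bi}), the orbit of $e_0$ is $\{e_n\}_{n\in\Z}$ and $\dd(p,e_n)=\arccos\langle p,e_n\rangle\to\pi/2$, so for any $p$ with $\sup_n\langle p,e_n\rangle<\sin 1$ the whole (infinite) orbit lies within distance $\ell+1$ of $p$ --- and yet the infimum \emph{is} attained there, because $\langle p,e_n\rangle\to 0$. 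So finiteness of the sublevel set is neither true nor what is needed; what is needed is that a minimizing sequence $g_nq$ is eventually constant, i.e.\ that the distance from a point to a closed discrete isometric orbit is realized, and that requires an argument about orbits rather than a counting argument. The remaining ingredients of your proposal (the covering formula for $\overline{\dd}$, the Cauchy-lifting proof of completeness, the projection of a minimal geodesic from $z$ to the closest fibre point) are correct and agree with the paper's proof.
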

\begin{proof}
Since $\Gamma$ acts isometrically and properly discontinuously on $M$, it follows that $M/\Gamma$ admits a Riemannian metric such that $M/\Gamma$ is complete and $\pi:M \lra M/\Gamma$ is a Riemannian covering map \cite{bi,La}. Let $p,q\in M/\Gamma$. Since $\Gamma$ acts properly discontinuously on $M$, then both $\pi^{-1}(p)$ and $\pi^{-1}(q)$ are $\Gamma$ orbits, and also closed and discrete subsets of $M$ \cite{Kn}. Hence given $z\in \pi^{-1}(p)$, there exists a unique $w\in \pi^{-1}(q)$ such that $\dd (z,w)\leq \dd (r,s)$ for every $r\in \pi^{-1}(p)$ and $s\in \pi^{-1}(q)$, i.e.,
$\dd (z,w)= \dd (\pi^{-1}(p),\pi^{-1}(q))$.
Let $\gamma$ be a minimal geodesic joining $z$ and $w$. We claim that $\pi \circ \gamma$ is a minimal geodesic. Since $\pi$ is a Riemannian covering map, then $\dd (p,q)\leq L(\pi \circ \gamma)=L(\gamma)=\dd (z,w)$. On the other hand pick a sequence $\gamma_n:[0,1] \lra M/\Gamma$ joining $p$ and $q$ such that $\lim_{n\mapsto +\infty} L(\gamma_n)=\dd (p,q)$. Since $\pi$ is a Riemannian covering map there exists a lift $\tilde \gamma_n$ starting at $z$ satisfying $L(\gamma_n)=L(\tilde \gamma_n)$. Therefore
\[
L(\gamma)=\dd (z,w)\leq L(\gamma_n) \mapsto \dd (p,q),
\]
and so $L(\pi \circ \gamma)=\dd (p,q)$.
\end{proof}
In \cite{bm} we prove a homogeneity result for  Riemannian Hilbert manifolds of constant sectional curvature. In finite dimension this result was proved by Wolf \cite{wolf1,wolf}.

An isometry $f:M \lra M$ is called a \emph{Clifford translation} if $\delta_f (x)=\dd (x,f(x))$ is a constant function. As in the finite dimensional case, if $M$ is a homogeneous Riemannian manifold and  $\Gamma$ a group acting on $M$ isometrically and properly discontinuously on $M$, then $M/\Gamma$ is homogeneous if and only if the centralizer of $\Gamma$, that we denote by $\mathrm{Z}(\Gamma)$,  acts transitively on $M$ \cite{bm,wolf}. In particular if $M/\Gamma$ is homogeneous then any element $g\in \Gamma$ is a Clifford translation. Indeed,
\[
\dd (x,g(x))=\dd (h(x),hg(x))=\dd (h(x),g(h(x))),
\]
for any $h\in \mathrm{Z}(\Gamma))$. Hence if $\mathrm{Z}(\Gamma)$ acts transitively on $M$ we get $f$ is a Clifford translation.

In the finite dimensional case, the homogeneity conjecture says that if $M$ is a homogeneous simply connected Riemannian manifold then $M/\Gamma$   is homogeneous if and only if all the elements of $\Gamma$ are Clifford translations. We point out that the conjecture is true for locally homogeneous symmetric spaces \cite{wolf2} and also for locally homogeneous Finsler symmetric spaces \cite{dw}. In \cite{bm} we proved the homogeneity conjecture for complete RH manifolds of constant sectional curvature. We leave the investigation of locally  homogeneous symmetric space of infinite dimension for future investigation (see  \cite{duc,klo,La} for basic references of symmetric space in infinite dimension.) The following result proves there are not non trivial Clifford translations on a Hadamard manifold, i.e., a simply connected Riemannian Hilbert manifold with negative sectional curvature.
\begin{prop}
Let $M$ be a simply connected RH   manifold of negative sectional curvature. If $f:M \lra M$ is a Clifford translation
then $f=\mathrm{Id}$.
\end{prop}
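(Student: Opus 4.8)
\emph{Proof proposal.} The plan is to argue by contradiction, supposing the constant $c:=\delta_f\equiv\dd(\cdot,f(\cdot))$ is strictly positive; if instead $c=0$ then $\dd(x,f(x))=0$ for every $x$, i.e. $f=\mathrm{Id}$. Since $M$ is simply connected with negative sectional curvature, I would invoke the version of the Cartan--Hadamard theorem for RH manifolds (\cite{mc,Gr,La}): $\exp_p$ is a diffeomorphism of $\tpm$ onto $M$ for every $p$, any two points are joined by a unique geodesic, which is minimizing and depends smoothly on its endpoints, and the distance function is smooth off the diagonal. Note also that $f(p)\neq p$ for all $p$, since $\dd(p,f(p))=c>0$. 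The two analytic ingredients are the first and second variation formulas for arc length; the role of \emph{strict} negativity is that the second variation of the length of a geodesic joining two other geodesics is strictly positive unless a certain Jacobi field vanishes identically.

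\textbf{Step 1: every point lies on an axis of $f$.} Fix $p\in M$ and let $\sigma\colon\R\to M$ be the unit-speed geodesic with $\sigma(0)=p$, $\sigma(c)=f(p)$; by Cartan--Hadamard $\sigma|_{[0,c]}$ is the unique minimizing geodesic from $p$ to $f(p)$. The function $g(t):=\delta_f(\sigma(t))=\dd(\sigma(t),f(\sigma(t)))$ is smooth and constantly equal to $c$, so $g'(0)=0$. Applying the first variation formula to the variation of $\sigma|_{[0,c]}$ whose endpoints move with velocities $\dot\sigma(0)$ at $p$ and $df_p(\dot\sigma(0))$ at $f(p)$ (the interior term vanishing because $\sigma$ is a geodesic) gives
\[
0=g'(0)=\langle\dot\sigma(c),df_p(\dot\sigma(0))\rangle-\langle\dot\sigma(0),\dot\sigma(0)\rangle=\langle\dot\sigma(c),df_p(\dot\sigma(0))\rangle-1 .
\]
As $df_p$ is a linear isometry, $\|df_p(\dot\sigma(0))\|=1=\|\dot\sigma(c)\|$, so the equality case of Cauchy--Schwarz forces $df_p(\dot\sigma(0))=\dot\sigma(c)$. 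Since $f$ maps geodesics to geodesics, $t\mapsto f(\sigma(t))$ is the geodesic through $f(p)=\sigma(c)$ with initial velocity $\dot\sigma(c)$, namely $t\mapsto\sigma(t+c)$; hence $f(\sigma(t))=\sigma(t+c)$ for all $t$, i.e. $\sigma$ is an axis of $f$ translated by $c$.

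\textbf{Step 2: two axes at constant distance, and the contradiction.} Negative sectional curvature requires $2$-planes, so $\dim M\ge 2$ and there is $q\notin\sigma(\R)$. Let $\sigma_q$ be the axis through $q$ produced by Step 1, so $f(\sigma_q(t))=\sigma_q(t+c)$ and $\sigma_q(0)=q$, and set $\phi(t):=\dd(\sigma(t),\sigma_q(t))$. Because $f$ is an isometry translating each axis by $c$, $\phi(t+c)=\dd(f(\sigma(t)),f(\sigma_q(t)))=\phi(t)$, so $\phi$ is periodic. On the other hand, let $c(\cdot,t)$ be the unit-speed geodesic from $\sigma(t)$ to $\sigma_q(t)$, a smooth variation through geodesics with geodesic boundary curves $\sigma,\sigma_q$; writing $T=\partial_s c$, $J=\partial_t c$, and $J^{\perp}$ for the component of $J$ orthogonal to $T$, the second variation formula reads
\[
L''(t)=\int_0^{\phi(t)}\Bigl(\|\nabla_s J^{\perp}\|^2-\langle R(J^{\perp},T)T,J^{\perp}\rangle\Bigr)\,ds ,
\]
the boundary terms vanishing because $\sigma$ and $\sigma_q$ are geodesics. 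Since $K\le 0$ the integrand is $\ge 0$, so $L(t)=\phi(t)$ is convex; a convex periodic function on $\R$ is constant, so $\phi\equiv a$ for some $a\ge 0$. If $a=0$ then $\sigma_q=\sigma$, contradicting $q\notin\sigma(\R)$. If $a>0$, then $L\equiv a$ gives $L''(t)\equiv 0$; but $K<0$ makes $-\langle R(J^{\perp},T)T,J^{\perp}\rangle>0$ wherever $J^{\perp}\neq 0$, whence $J^{\perp}\equiv 0$ along every $c(\cdot,t)$. In particular $J(0,t)=\dot\sigma(t)$ is proportional to $T(0,t)$, the initial direction of $c(\cdot,t)$; both being unit vectors, $T(0,t)=\pm\dot\sigma(t)$, so $c(\cdot,t)$ — hence $\sigma_q(t)=c(a,t)$ — lies on $\sigma(\R)$, giving $q=\sigma_q(0)\in\sigma(\R)$, a contradiction. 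Therefore $c=0$ and $f=\mathrm{Id}$.

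The crux is Step 2: what is really being proved there is the absence of two distinct ``parallel'' geodesics in strictly negative curvature (a flat-strip type rigidity), and the essential input is that the second variation is \emph{strictly} positive, not merely non-negative. Since the whole argument takes place on the two-dimensional surface swept out between two geodesics, the index-form/Rauch comparison goes through verbatim in the Hilbert setting, so no genuinely infinite-dimensional obstacle appears; the only non-elementary external ingredient is the Cartan--Hadamard theorem for RH manifolds used at the outset.
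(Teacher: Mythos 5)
Your proof is correct and follows essentially the same strategy as the paper: first show that every point lies on an axis of $f$, then derive a contradiction with strictly negative curvature from the existence of two parallel axes (a flat-strip rigidity argument). The only difference is that the paper outsources both steps to citations (Lemma 5.2 of \cite{bm} and \cite{ozul} for the axis property, and Theorem 1 of \cite{wolf3} for the flat totally geodesic surface swept out by the axes), whereas you prove them directly via the first and second variation formulas, giving a self-contained version of the same argument.
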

\begin{proof}
Assume $f(p)\neq p$ for some $p\in M$,\ hence for every $p \in M$. By  Cartan-Hadamard Theorem $M$ is a Hopf-Rinow manifold and so by Lemma 5.2 p. 448 in \cite{bm}, see also \cite{ozul}, $f$ preserves the minimal geodesic, that we denote by $\gamma_p$, joining  $p$ and $f(p)$.  Let $p\in M$ and let $\theta$ be a geodesic different from $\gamma_p$. As in the Proof of Theorem $1$ p. 16 in \cite{wolf3}, one can prove that the union $\gamma_{\theta(t)}$ is a flat totally geodesic surface which is a contradiction.
\end{proof}

\section{Jacobi fields and conjugate points}\label{s1}
Let $M$\ be a RH manifold and let $\gamma:[0,b)\lra M$ be a geodesic with $\gamma(0) = p$. Without lost of generality we assume that $\gamma(t)=\exp_p (tv)$, with $\| v \| = 1$.  A Jacobi field along $\gamma$ is a smooth vector field $J$ along $\gamma$ satisfying
$$
\nabla_{\fis \gamma (t)} \nabla_{\fis \gamma (t)} J (t)+R(\fis \gamma (t),J(t))J(t)=0.
$$
In the sequel we will denote by $J'(t)$\ the covariant derivative $\nabla_{\fis \gamma (t)} J (t)$. If $J_1$ and $J_2$ are Jacobi fields along $\gamma$, then
\begin{gather}\label{formula}
\langle J_1'(t),J_2 (t) \rangle - \langle J_1 (t), J_2 ' (t) \rangle =\mathrm{Constant}.
\end{gather}
This formula is due to Ambrose (see \cite{La}). The Jacobi field along $\gamma$ satisfying $J(0)=0$ and $J'(0)=\nabla_{\fis \gamma (t)} J(0)=w$ is given by $J(t)=(\mathrm d \exp_p )_{tv} (tw)$. Hence
$(\mathrm d \exp_p )_v (w)=0$ if and only if there exists a Jacobi field  $J$ along $\gamma(t)$ such that $J(0)=0$ and $J(1)=0$.

In infinite dimension there exist two types of singularities of the exponential map.
\begin{defin}
We will say that $q=\gamma(t_o)$, $t_o \in (0,b)$, is
 \begin{itemize}
 \item \emph{monoconjugate} to $p$\ along $\gamma$ if $(\mathrm d \exp_p )_{t_o v}$ is not injective,
 \item \emph{epiconjugate}, to $p$ along $\gamma$ if $(\mathrm d \exp_p )_{t_o v}$ is not surjective.
\end{itemize}
  We also say $q=\gamma(t_o)$
is conjugate of $p$ along $\gamma$ if $(\mathrm d \exp_p )_{t_o v}$ is not an isomorphism and $t_o \in (0,b)$ is a conjugate, monoconjugate, respectively epiconjugate instant if $\gamma(t_o)$ is conjugate, monoconjugate, respectively epiconjugate of $p$ along $\gamma$.
\end{defin}

Let $ \pt{t}{s} : T_{\gamma(t)} M
\lra T_{\gamma (s)} M$ be the isometry between the tangent
spaces given by the parallel transport along the geodesic $\gamma$. The following result is easy to check.
\begin{lemma}\label{l1}
If $V: [0,b) \lra \tpm$,\ then
$
\nabla_{\fis \gamma (t)} \pt{0}{t} (V(t))=\pt{0}{t}(\fis V (t)).
$
\end{lemma}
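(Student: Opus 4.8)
The plan is to reduce the identity to the standard description of the covariant derivative of a vector field along a curve via parallel transport. Write $W(t)=\pt{0}{t}(V(t))$ for the vector field along $\gamma$ appearing on the left-hand side. Recall that for any smooth vector field $W$ along $\gamma$ and any $t_o\in[0,b)$ one has
\[
\nb_{\fis\gamma(t_o)}W(t_o)=\lim_{h\to 0}\frac{\pt{t_o+h}{t_o}\bigl(W(t_o+h)\bigr)-W(t_o)}{h},
\]
the limit being taken in the Hilbert space $T_{\gamma(t_o)}M$. This is proved exactly as in finite dimensions (see \cite{La}): it uses only the linear ODE defining parallel transport together with smoothness, and never local compactness.

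First I would substitute $W(t)=\pt{0}{t}(V(t))$ into this formula and use the cocycle property $\pt{t_o+h}{t_o}\circ\pt{0}{t_o+h}=\pt{0}{t_o}$ of parallel transport. The difference quotient then becomes
\[
\frac{\pt{t_o+h}{t_o}\bigl(\pt{0}{t_o+h}(V(t_o+h))\bigr)-\pt{0}{t_o}(V(t_o))}{h}=\frac{\pt{0}{t_o}\bigl(V(t_o+h)-V(t_o)\bigr)}{h}.
\]
Since $\pt{0}{t_o}:\tpm\lra T_{\gamma(t_o)}M$ is a bounded linear map (a linear isometry, in fact), I can pass the limit inside it, obtaining
\[
\nb_{\fis\gamma(t_o)}W(t_o)=\pt{0}{t_o}\Bigl(\lim_{h\to 0}\frac{V(t_o+h)-V(t_o)}{h}\Bigr)=\pt{0}{t_o}\bigl(\fis V(t_o)\bigr),
\]
where $\fis V(t_o)$ is the ordinary derivative of the $\tpm$-valued curve $V$. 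Since $t_o$ is arbitrary, this is the assertion of the lemma.

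Equivalently, one may phrase the argument via trivialization: parallel transport along $\gamma$ gives an isometric trivialization of $\gamma^{*}TM$ over $[0,b)$ under which $\nb_{\fis\gamma}$ corresponds to ordinary differentiation in $\tpm$; the field $W(t)=\pt{0}{t}(V(t))$ corresponds under this trivialization to the curve $t\mapsto V(t)$ itself, so its covariant derivative corresponds to $\fis V(t)$, i.e.\ equals $\pt{0}{t}(\fis V(t))$.

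There is essentially no hard step: the only thing to be a little careful about is that both ingredients — the parallel-transport formula for $\nb_{\fis\gamma}$ and the interchange of the limit with the bounded operator $\pt{0}{t_o}$ — are legitimate in the RH setting, which they are, since they rest only on the linear ODE for parallel transport and on $\pt{0}{t_o}$ being an isometry. If one prefers to avoid the limit characterization altogether, one can instead differentiate $W$ in a coordinate chart and use the structure equations for parallel transport, but the argument above is the most economical.
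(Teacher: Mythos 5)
Your proof is correct. The paper offers no argument at all for this lemma (it is declared ``easy to check''), and your verification via the limit characterization of $\nb_{\fis\gamma}$ through parallel transport --- equivalently, the observation that the parallel trivialization of $\gamma^{*}TM$ turns covariant differentiation into ordinary differentiation in $\tpm$ --- is exactly the standard argument one would supply, and it carries over to the Hilbert setting for the reasons you state.
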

By the above Lemma, a Jacobi field along $\gamma$ such that $J(0)=0$ is given by $J(t)=\pt{0}{t} ( T(t)(V))$, where $V\in \tpm$,  and $T(t)$ is a family of endomorphism of $\tpm$ satisfying
\[
\left \{ \begin{array}{l}
T''(t)\ +\ R_t(T(t))=0;\\
T(0)=0,\ T'(0)=\mathrm{Id},
\end{array}
\right.
\]
where
$
R_t:\tpm    \lra \tpm$ is a one parameter family od endomorphism of $\tpm$ defined by  $R_t(X)=\pt{t}{0}(R(\pt{0}{t}(X),\fis{\gamma}(t))\fis{\gamma}(t)).
$
We call the above differential equation the {\em Jacobi flow } of $\gamma$.
\begin{exe}\label{Jacobi-constat}
Assume that $M$ is a RH manifold with constant sectional curvature $K_o$. Then
\[
T(t)(w)=\left\{\begin{array}{ll}
\frac{\sinh (t\sqrt{-K_o})}{\sqrt{-K_o}} w & K_o<0 \\
t w & K_o=0 \\
\frac{\sin (t\sqrt{K_o})}{\sqrt{K_o}}  & K_o >0
\end{array}\right.
\]
\end{exe}
Karcher used the Jacobi flow to get Jacobi fields estimates \cite{Kar}. By standard properties of the curvature, it follows $R_t$ is a symmetric endomorphism of $\tpm$. Since $\pt{0}{t} \circ T(t)=t (\mathrm d \exp_p)_{tv}$ we may thus equivalently state the definitions of monoconjugate, epiconjugate in terms of injectivity, respectively surjectivity of $T(t)$. Moreover, conjugate instants are also discussed in terms of  Lagrangian curves \cite{bpet}. Indeed,
the Hilbert space $\tpm \times \tpm$ has a natural symplectic structure given by $\omega ((X,Y),(Z,W))=\langle X,W \rangle - \langle Y,Z \rangle$. It is easy to check that $\Psi(t):\tpm \times \tpm \lra \tpm \times \tpm$ defined by $\Psi(t)(X,Y)=(\pt{t}{0} (J(t)),\pt{t}{0}(J'(t)))$, where $J(t)$ is the Jacobi field along $\gamma$ such that $J(0)=X$ and $J'(0)=Y$, is a symplectomporhism of $(\tpm \times \tpm, \omega)$. Then $E_t=\Phi_t (\{0\} \times \tpm)$ is a curve of Lagrangian subspaces of $\tpm \times \tpm$. Moreover $t_o \in (0,b)$ is a monoconjugate instant, respectively a epiconjugate instant, if and only if $E_t \cap (\{0\}\times \tpm) \neq \{0\}$, respectively if and only if $E_t  + (\{0\} \times \tpm) \neq \tpm \times \tpm$.

Let $t_o\in (0,b)$. We compute the transpose of $T(t_o)$.
Let $J_1 (t)=\pt{0}{t}(T(t)(v))$ and let $u \in \tpm$. Let $J_2$ be the Jacobi field along the geodesic $\gamma$ such that
$
J_2(t_o)=0, \ \nabla_{\fis \gamma(t_o)} J_2 (t_o)=\pt{0}{t_o}(u).
$
By (\ref{formula}), we have
$
\langle J_1(t_o),J_2'(t_o)\rangle=\langle J_1'(0),J_2 (0)\rangle
$
and so
$
\langle T(t_o)(v),u \rangle=\langle v, \pt{t_o}{0} (J_2 (t_0)) \rangle.
$
Let $\overline\gamma(t)=\gamma(t_o-t)$  and let
\[
\left \{ \begin{array}{l}
\taT''(t)\ +\ R_t (\taT(t))=0;\\
\taT(0)=0,\ \taT'(0)=id,
\end{array}
\right.
\]
be the Jacobi flow along $\overline{\gamma}$. Summing up we have proved that $T^* (t_o)=\pt{t_o}{0} \circ \taT (t_o) \circ  \circ \pt{0}{t_o}$. As a corollary, keeping in mind  Example \ref{Jacobi-constat}, we get the following result.
\begin{prop} \label{p1}
The kernel of $ T(t_o)$ and the kernel of $T^* (t_o)$ are isomorphic. Hence a monoconjugate point is also epiconjugate. Moreover, if $M$ has constant sectional curvature $K_o$, then $T(t)$ is an isomorphism for any $t>0$ whether $K_o \leq 0$, and $T(t)$ is an isomorphism for $0<t<\frac{\pi}{\sqrt{K_o}}$ whether $K_o>0$.
\end{prop}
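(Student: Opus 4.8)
The plan is to deduce all three assertions from the identity $T^{*}(t_o)=\pt{t_o}{0}\circ\taT(t_o)\circ\pt{0}{t_o}$ obtained just above the statement, together with the invariance of the Jacobi equation under reversal of the geodesic; the constant-curvature clause will then be a direct reading of Example~\ref{Jacobi-constat}.

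First I would dispose of the operators $\pt{0}{t_o},\pt{t_o}{0}$: they are linear isometries, so the displayed identity immediately gives $\ker T^{*}(t_o)=\pt{t_o}{0}\bigl(\ker\taT(t_o)\bigr)$, and it remains to build an isomorphism $\ker T(t_o)\cong\ker\taT(t_o)$. For this I would pass to Jacobi fields. As recalled before the statement, $J(t)=\pt{0}{t}(T(t)(V))$ is the Jacobi field along $\gamma$ with $J(0)=0$, $J'(0)=V$, so $V\in\ker T(t_o)$ iff $J(t_o)=0$ as well; hence $V\mapsto J'(0)$ identifies $\ker T(t_o)$ with the space $\mathcal{J}$ of Jacobi fields along $\gamma|_{[0,t_o]}$ that vanish at both endpoints, and likewise $\ker\taT(t_o)$ with the analogous space $\overline{\mathcal{J}}$ along $\overline{\gamma}|_{[0,t_o]}$. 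The map $J\mapsto\bigl(t\mapsto J(t_o-t)\bigr)$ is then a linear bijection $\mathcal{J}\to\overline{\mathcal{J}}$: the curvature operator $R_t$ entering the Jacobi flow depends quadratically on $\dot\gamma$, so the equation is unchanged by $t\mapsto t_o-t$, while the boundary conditions at $0$ and at $t_o$ are simply interchanged. Composing, $\ker T(t_o)\cong\mathcal{J}\cong\overline{\mathcal{J}}\cong\ker\taT(t_o)\cong\ker T^{*}(t_o)$, which is the first assertion.

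The second assertion is then formal: if $q=\gamma(t_o)$ is monoconjugate then $\ker T(t_o)\neq\{0\}$, hence $\ker T^{*}(t_o)\neq\{0\}$; since $T^{*}(t_o)$ is the Hilbert-space adjoint of $T(t_o)$, the closure of the range of $T(t_o)$ equals $\bigl(\ker T^{*}(t_o)\bigr)^{\perp}\subsetneq\tpm$, so $T(t_o)$ is not surjective and $q$ is epiconjugate. For the last assertion I would just invoke Example~\ref{Jacobi-constat}, where $T(t)$ is, according to the sign of $K_o$, the scalar operator $\tfrac{\sinh(t\sqrt{-K_o})}{\sqrt{-K_o}}\,\mathrm{Id}$, $t\,\mathrm{Id}$, or $\tfrac{\sin(t\sqrt{K_o})}{\sqrt{K_o}}\,\mathrm{Id}$; a nonzero multiple of the identity is an isomorphism, and the scalar coefficient is nonzero exactly for $t>0$ when $K_o\le 0$ and for $0<t<\pi/\sqrt{K_o}$ when $K_o>0$.

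The only step calling for genuine care is the reversal bijection $\mathcal{J}\cong\overline{\mathcal{J}}$: one must check that $t\mapsto J(t_o-t)$ is again a Jacobi field and that, after conjugating by the relevant parallel transports, it is governed precisely by the flow defining $\taT$. This is routine bookkeeping with parallel frames — it is essentially the same computation already performed in the paragraph that produced the formula for $T^{*}(t_o)$ — so I do not expect a real obstacle; the content of the proposition is, in effect, already packaged in that earlier identity.
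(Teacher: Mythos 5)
Your proposal is correct and takes essentially the same route as the paper, which states the proposition precisely as a corollary of the identity $T^*(t_o)=\pt{t_o}{0}\circ\taT(t_o)\circ\pt{0}{t_o}$ together with Example \ref{Jacobi-constat}; your identification of the kernels with endpoint-vanishing Jacobi fields and the reversal $t\mapsto t_o-t$, plus the orthogonality relation $\overline{\mathrm{Im}\,T(t_o)}=(\ker T^*(t_o))^{\perp}$, is exactly the intended reading. (Only a wording slip: the identification of $\ker T(t_o)$ with $\mathcal J$ is $V\mapsto J_V$, the inverse of $J\mapsto J'(0)$, not ``$V\mapsto J'(0)$''.)
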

The above result was proven by McAlpin \cite{mc} and Grossmann in \cite{Gr}. Since both Rauch and Berger Comparison Theorems work for RH   manifolds \cite{bi3,La}, they also work for a weak Riemannian Hilbert manifold \cite{bi2}, the second part of Proposition \ref{p1} holds for any RH manifold with negative sectional curvature and for any RH manifold with sectional curvature bounded above for a constant $K_o>0$.

Proposition \ref{p1} implies that  if $\mathrm{Im}\, T(t_o)$ is closed then
monoconjugate implies epiconjugate and vice-versa. This holds, for example, if $\exp_p$ is Fredholm.
We recall that a smooth map between Hilbert manifolds $f:M \lra N$ is called
Fredholm if for each $p \in M$ the derivative $(\mathrm d f)_p: T_p M \lra
T_{f(p)} N$ is a Fredholm operator. If $M$ is connected then the
${\rm ind}\ (df)_p$ is independent of $p$, and  one defines
the index of $f$ by setting ${\rm ind} (f)={\rm ind} (df)_p$ (see
\cite{et,Ru}).
Misiolek proved that the exponential map of a free loop space with its natural Riemannian metric is Fredholm \cite{Mi}. Misiolek also pointed out that if the curvature is a compact operator, i.e., for any $X\in \tpm$, the map $Z \mapsto R(Z,X)X$
is a compact operator, then $T(t)$ is Fredholm of index zero and so the exponential map is Fredholm as well \cite{Mi2}. Indeed,
\[
T(t)=tId -\int_{0}^t (\int_0^h R_s (T(s)) ds)dh
\]
and so $T(t)=tId +K(t)$ where $K(t)$ is a compact operator. Hence $T(t)$  is Fredholm \cite{Ru} and so $\exp_p$ is Fredholm.

It
is convenient to introduce the notion of {\em strictly epiconjugate\/} instant, to denote an instant $t\in\left]0,b\right[$
for which the range of $T(t)$ fails to be closed. Unlike finite-dimensional Riemannian geometry, conjugate instants
can accumulate. The classical example of this phenomenon is given by an infinite dimensional ellipsoid
in $\ell^2$ whose axes form a non discrete subset of the real line given by Grossman (\cite{Gr}).

Let
$M= \{ x \in \ell^2 :\ x^2_1\ +\ x^2_2 \ + \ \sum_{i=3}^{\infty} (1-\frac{1}{i})^4 x^2_i=1 \}$. $M$ is a closed submanifold of $\ell^2$
and the curve $
\gamma(t)= \cos t  e_1 \ + \ \sin t  e_2
$
is a geodesic of $M$ since it is the set of fixed points of the isometry
\[
F(\sum_{i=1}^\infty x_i e_i)= x_1 e_1 + x_2 e_2 +\sum_{i=3}^\infty (-x_i)e_i.
\]
For any $k\geq 3$,
$
E_k:= \{ x^2_1\ +\ x^2_2 \ +\ (1-\frac{1}{k})^4 x^2_k =1\ \} \hookrightarrow M
$
is a totally geodesic submanifold of $M$ since it is the fixed points set of the isometry
$
F(\sum_{i=1}^\infty x_i e_i)= x_1 e_1 - x_2 e_2 + x_k e_k +\sum_{i=3, i\neq k}^\infty (-x_i)e_i.
$
Hence $K(\fis{\gamma}(s),e_k)=(1-\frac{1}{k})^2$, $J_k(t)=\sin (t(1-\frac{1}{k})) e_k$  is the Jacobi field along $\gamma$ satisfying $J(0)=0$ and $J'(0)=e_k$. Consider $q_k=\frac{k\pi}{k-1}$.\ Then $q_k$ is a sequence of monoconjugate instant such that $\lim_{k\mapsto \infty} q_k =\pi$. We claim that $-e_1=\gamma(\pi)$ is a strictly epiconjugate point. Indeed,
\[
T(\pi) (e_2+ \sum_{k=3}^{\infty} b_k e_k )=
e_2+ \sum_{k=3}^{\infty} b_k \sin ((\frac{k-1}{k}) \pi) e_k
\]
 which implies $T(\pi)$ is injective. On the other hand $\sum_{i=3}^\infty \frac{1}{k} e_k$ does not lie in $\mathrm{Im}\, T(\pi)$ and so $\gamma(\pi)$ is strictly epiconjugate. Indeed if $\sum_{i=3}^\infty \frac{1}{k} e_k \in \mathrm{Im}\, T(\pi)$ then $\sum_{k=3}^\infty \frac{1}{k} e_k=\sum_{k=3}^{\infty} b_k \sin ((1-\frac{1}{k}) \pi) e_k$ and so
$
-\sin (\pi\frac{1}{k}) b_k= \frac{1}{k}.
$
Hence
\[
\lim_{k\mapsto +\infty} b_k=-\lim_{k\mapsto +\infty} k\sin(\pi \frac{1}{k})=-\pi
\]
which is a contradiction. Hence $\gamma(\pi)$ is a strictly epiconjugate point along $\gamma$ and it is an accumulation point of  sequence of monoconjugate points.

In \cite{bpet} the authors give a complete characterization of the
conjugate instants along a geodesic. In particular the set of conjugate instants is closed and the set of strictly epiconjugate points are limit of conjugate points as before. Hence if there is no strictly epiconjugate
instant along $\gamma$ then the set of conjugate instants along any compact
segment of $\gamma$ is finite. Under these circumstances a Morse Index Theorem for geodesics in RH manifolds holds true.

\end{document}